\documentclass[a4paper,12pt]{amsart} 
 \usepackage[T1]{fontenc}            % Kan bruke norske bokstaver
 \usepackage{amscd}                  % Kommutative diagram
 \usepackage{xypic}                  % Kommu
 \usepackage{amssymb}
\oddsidemargin  0.4 cm
\evensidemargin 0.4 cm
\textwidth     15.16 cm
\headsep        0.8 cm
\tracingstats=1
\hfuzz5pc % Don't bother to report \
          % overfull boxes if overage is < 1pc
          
\newtheorem{theorem}{Theorem}[section]

\title{The sequence of middle divisors is unbounded}
\author{Jon Eivind Vatne}
\begin{document}
\begin{abstract}
The sequence of middle divisors is shown to be unbounded.
For a given number $n$, $a_{n,0}$ is the number of divisors of $n$ in between $\sqrt{n/2}$ and $\sqrt{2n}$.
We explicitly construct a sequence of numbers $n(i)$ and a list of divisors in the interesting range, so that the length of the list goes to infinity as $i$ increases.
\end{abstract}

\maketitle
{\bf Keywords:} integer sequence, polynomial coefficient\\
\subjclass{2010 Mathematics subject classification: 11B83(primary), 11T55(secondary)} 
\section{Introduction}
In \cite{KR}, Kassel and Reutenauer studies the zeta function of the Hilbert scheme of $n$ points in the two-torus.
The polynomial counting ideals of codimension $n$ in the Laurent algebra in two variables turns out to have an interesting quotient, whose middle coefficient $a_{n,0}$ has a direct description:
\begin{equation*}
a_{n,0} =\left|\{d\,:\, d|n\,,\,\frac{\sqrt{2n}}{2}<d\leq \sqrt{2n}\}\right|.
\end{equation*}
We follow the symbolism from \cite{KR}, which the reader should also consult for more motivation.
In a talk at the conference {\em Algebraic geometry and Mathematical Physics 2016}, in honour of A. Laudal's 80th birthday, Kassel discussed the results in \cite{KR} and asked whether the sequence $a_{n,0}$ is bounded or not.
Evidently it grows very slowly.
The sequence is included in the online encyclopedia of integer sequences as sequence A067742 \cite{oeis}.\\

In this short note, we will show that the sequence is unbounded.
The idea is to choose $n$ such that $\sqrt{n/2}$ is a divisor, and to multiply this divisor with a number slightly larger than one repeatedly, making sure that the product still divides $n$ as long as it is smaller than $\sqrt{2n}$.

\section{Unboundedness of the sequence}
\begin{theorem}
Let 
\begin{equation*}
a_{n,0} =\left|\{d\,:\, d|n\,,\,\frac{\sqrt{2n}}{2}<d\leq \sqrt{2n}\}\right|.
\end{equation*}
Then
\begin{equation*}
\limsup_{n\to \infty} a_{n,0} = \infty
\end{equation*}
More precisely, for any $i\geq 1$ define $s_{max}=\ln(2)/\ln(1+i^{-1})$ and 
\begin{equation}
\label{Choicen}
n(i)= 2(i+1)^{\lceil 2s_{ max}\rceil}\cdot i^{2\lceil s_{max}\rceil}.
\end{equation}
Then $\lim_{i\to\infty}a_{n(i),0} =\infty$.
\end{theorem}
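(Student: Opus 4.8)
The plan is to use that $(\sqrt{n/2},\sqrt{2n}]$ has multiplicative length exactly $2$ (since $\sqrt{2n}/\sqrt{n/2}=2$), so that a geometric chain $d_0<d_1<\dots$ of divisors of $n$ with constant ratio $d_{k+1}/d_k=1+i^{-1}$ which passes through this interval necessarily has at least $\lfloor s_{max}\rfloor$ of its terms inside it: an interval of logarithmic length $\ln 2$ contains at least $\lfloor \ln 2/\ln(1+i^{-1})\rfloor=\lfloor s_{max}\rfloor$ terms of any arithmetic progression of log-step $\ln(1+i^{-1})$. Since $s_{max}=\ln 2/\ln(1+i^{-1})\to\infty$, it is enough to produce such a chain that (i) reaches the window and (ii) actually consists of divisors of $n(i)$ for a range of $k$ long enough to cover the window. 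Because $1+i^{-1}=(i+1)/i$ and $\gcd(i,i+1)=1$, the chain must take the form $d_k=2^{c}(i+1)^{p+k}i^{q-k}$, and the exponents $\lceil 2s_{max}\rceil$ and $2\lceil s_{max}\rceil$ of $i+1$ and $i$ in $n(i)$ — both at least $2s_{max}$ — are chosen precisely to leave room for $|k|$ up to about $s_{max}$.

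Write $a=\lceil s_{max}\rceil$, $b=\lceil 2s_{max}\rceil$, so $n(i)=2(i+1)^{b}i^{2a}$ with $2s_{max}\le b\le 2a$ and $s_{max}\le a$; one checks that $b=2a$ when $b$ is even and $b=2a-1$ when $b$ is odd. If $b$ is even I would simply take $d_k=i^{2a-k}(i+1)^k$ for $0\le k\le b$: then $d_k/\sqrt{n/2}=i^{a-k}(i+1)^{k-b/2}=(1+i^{-1})^{k-a}$ (using $b=2a$), so $d_k$ lies in the window precisely for $a<k\le a+s_{max}$, i.e.\ for the $\lfloor s_{max}\rfloor$ integers $k=a+1,\dots,a+\lfloor s_{max}\rfloor$, and for these the divisibility conditions $0\le 2a-k$ and $k\le b=2a$ hold automatically since $a+s_{max}\le 2a$. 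This gives $a_{n(i),0}\ge\lfloor s_{max}\rfloor$, and in particular $\limsup_{n\to\infty}a_{n,0}=\infty$.

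If $b=2a-1$ is odd, $\sqrt{n/2}=(i+1)^{b/2}i^a$ is irrational and the naive anchor $d_0=i^{2a}$ overshoots, since $i^{2a}/\sqrt{n/2}=\sqrt{i+1}\,(1+i^{-1})^{-a}\approx \sqrt{i+1}/2>2$ once $i\ge 16$; so the chain has to be re-anchored. I would let $j$ be the unique integer with $2^{j}\in(\sqrt{i+1},2\sqrt{i+1}]$ and put
$$d_0=2^{\,j}(i+1)^{a-1}i^{a},\qquad d_k=d_0(i+1)^k i^{-k}=2^{\,j}(i+1)^{a-1+k}i^{a-k}.$$
Then $d_0/\sqrt{n/2}=2^{j}/\sqrt{i+1}\in(1,2]$, so $d_0$ already lies in the window; and a short computation of $p$-adic valuations — splitting off $p=2$ and the two cases ``$i$ even'' and ``$i+1$ even'' — shows $d_k\mid n(i)$ for every $k$ with $-s_{max}-O(\log i)\le k\le s_{max}-O(\log i)$, the slack coming from $b,2a\ge 2s_{max}$ and from the fact that, since one of $i,i+1$ is even, the $2$-adic valuation of $n(i)$ is enormous. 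For the $k$ with $d_k\in(\sqrt{n/2},\sqrt{2n}]$, which form a half-open interval of length $s_{max}$ sitting near $0$, these two constraints are compatible up to an $O(\log i)$ loss at the ends, so $a_{n(i),0}\ge\lfloor s_{max}\rfloor-O(\log i)\to\infty$.

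The step I expect to be the main obstacle is precisely this odd case: one has to land the anchor $d_0$ inside a ratio-$2$ window \emph{and} certify that the whole chain $d_0(1+i^{-1})^{k}$, $|k|\lesssim s_{max}$, never leaves the divisor lattice of $n(i)$ — an elementary but genuine case analysis driven by whether $i$ or $i+1$ is even and by the size of $j$ (of order $\frac12\log_2(i+1)$), with the arithmetic of the two ceilings $\lceil s_{max}\rceil$, $\lceil 2s_{max}\rceil$ keeping exactly enough room. Once the chain is in hand the counting in both cases is routine and delivers $\lim_{i\to\infty}a_{n(i),0}=\infty$.
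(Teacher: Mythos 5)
Your proposal is correct, and in the case $\lceil 2s_{max}\rceil = 2\lceil s_{max}\rceil$ it coincides with the paper's proof: the paper's divisors $d(s)=(i+1)^{\lceil s_{max}\rceil+s}i^{\lceil s_{max}\rceil-s}$, $s=1,\dots,\lfloor s_{max}\rfloor$, are exactly your chain $d_k=i^{2a-k}(i+1)^k$ under the shift $k=a+s$. The noteworthy point is that the ``odd case'' you flag as the main obstacle is not a defect of your write-up but a genuine gap in the paper itself: the identity $\sqrt{n/2}=(i+1)^{\lceil s_{max}\rceil}i^{\lceil s_{max}\rceil}$ asserted there requires $\lceil 2s_{max}\rceil=2\lceil s_{max}\rceil$, and when instead $\lceil 2s_{max}\rceil=2\lceil s_{max}\rceil-1$ (equivalently $0<s_{max}-\lfloor s_{max}\rfloor\leq 1/2$, which already happens at $i=3$, where $s_{max}\approx 2.409$, $n(3)=2\cdot 4^5\cdot 3^6$, and $\sqrt{n/2}=864$ while the paper's formula gives $1728=\sqrt{2n}$) one finds $d(s)/\sqrt{n/2}=\sqrt{i+1}\,(1+i^{-1})^{s}>2$ for all $s\geq 1$ and $i\geq 3$, so none of the exhibited divisors lie in the window and the stated bound is not established along the full sequence $n(i)$. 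Your repair — re-anchoring by the power of two with $2^{j}/\sqrt{i+1}\in(1,2]$ — is sound, and the valuation bookkeeping you sketch does close: for odd $p\mid i$ the condition is $k\geq -a$, for odd $p\mid i+1$ it is $k\leq a$, both satisfied throughout the length-$s_{max}$ window since $a\geq s_{max}$, and the $p=2$ condition removes at most about $j-1\leq \tfrac12\log_2(i+1)+1$ integers from one end, giving $a_{n(i),0}\geq\lfloor s_{max}\rfloor-O(\log i)\to\infty$. Two small remarks: $\sqrt{n/2}$ need not be irrational in the odd case (take $i=3$, $i+1=4$), though your argument never uses irrationality, only the overshoot by $\sqrt{i+1}$; and the cheapest repair of all is to replace the exponent $\lceil 2s_{max}\rceil$ of $i+1$ in the definition of $n(i)$ by $2\lceil s_{max}\rceil$ — almost certainly the author's intent — after which your even-case computation applies verbatim for every $i$ and no case analysis is needed.
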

\begin{proof}
With the choice of $n(i)$ from \eqref{Choicen}, we have that
\begin{equation*}
\sqrt{n/2} = (i+1)^{\lceil s_{ max}\rceil}\cdot i^{\lceil s_{max}\rceil},
\end{equation*}
a divisor of $n(i)$.
For each $s=1,2,\dots,\lfloor s_{max}\rfloor$, consider
\begin{equation*}
d(s) = \sqrt{n/2}\left(\frac{i+1}{i}\right)^s = (i+1)^{\lceil s_{ max}\rceil+s}\cdot i^{\lceil s_{ max}\rceil-s}.
\end{equation*}
This divides $n(i)$ as long as $\lceil s_{ max}\rceil+s\leq 2 \lceil s_{ max}\rceil$ and $\lceil s_{ max}\rceil-s\geq 0$, which in both cases translates simply to $s\leq \lfloor s_{max}\rfloor$.
Thus we have exhibited a number of divisors, so that
\begin{equation*}
a_{n(i),0} \geq \lfloor s_{max}\rfloor.
\end{equation*}
Note also that $s_{max}$ is chosen so that
\begin{equation*}
\left(\frac{i+1}{i}\right)^{s_{max}} = 2.
\end{equation*}
Therefore all the $d(s)$ are in the interesting interval.
Since
\begin{equation*}
\lim_{i\to\infty}s_{max}(i) = \lim_{i\to\infty} \frac{\ln 2}{\ln(1+i^{-1})} = \infty
\end{equation*}
this proves the theorem.
\end{proof}

The sequence $n(i)$ grows very quickly whereas as the sequence $s_{max}(i)$ grows slowly.
It is likely that the minimal $n$ needed to find a given value for $a_{n,0}$ is a lot smaller than what is constructed in the proof.\\

{\bf Acknowledgements}\\
Thanks are due to C. Kassel for telling me about this problem and encouraging me to write down the proof.

{\small
}
{\small
{\em Authors' address}:
{\em Jon Eivind Vatne}, Department of Computing, Mathematics and Physics, Faculty of Engineering, Bergen University College, PO.Box 7030, N-5020 Bergen, Norway ...
 e-mail: \texttt{jon.eivind.vatne@\allowbreak hib.no}.

}
\end{document}